\newtheorem*{corollary*}{Corollary}
\newtheorem{theorem}{Theorem}[section]
\newtheorem*{theorem*}{Theorem}
\newtheorem{corollary}[theorem]{Corollary}
\newtheorem{lemma}[theorem]{Lemma}
\newtheorem{proposition}[theorem]{Proposition}
\newtheorem*{claim*}{Claim}
\theoremstyle{definition}
\newtheorem{example}[theorem]{Example}
\theoremstyle{remark}
\numberwithin{equation}{theorem}
\renewcommand*\env@matrix[1][\
arraystretch]{%
  \edef\arraystretch{#1}%
  \hskip -\arraycolsep
  \let\@ifnextchar\new@ifnextchar
  \array{*\c@MaxMatrixCols c}}
\renewcommand{\mod}{\operatorname{\mathsf{mod\text{-}}}}
\newcommand{\Hom}{\operatorname{Hom}}
\renewcommand{\top}{\operatorname{\mathrm{top}}}
\newcommand{\soc}{\operatorname{\mathrm{soc}}}
\newcommand{\findim}{\operatorname{findim}}
\newcommand{\gldim}{\operatorname{gldim}}
\newcommand{\id}{\operatorname{id}}
\newcommand{\pd}{\operatorname{pd}}
\begin{document}

\title{On bounds of homological dimensions in Nakayama algebras}
\date{\today}

\subjclass[2010]{Primary 16G10, 16E10}

\keywords{global dimension, finitistic dimension, Nakayama algebras, standardly stratified algebras}

\author{Dag Oskar Madsen}
\address{Faculty of education and arts, Nord University, Post box 1490, NO-8049 Bod{\o}, Norway}
\email{dag.o.madsen@nord.no}

\author{Ren\'{e} Marczinzik}
\address{Institute of algebra and number theory, University of Stuttgart, Pfaffenwaldring 57, 70569 Stuttgart, Germany}
\email{marczire@mathematik.uni-stuttgart.de}

\begin{abstract}
Let $A$ be a Nakayama algebra with $n$ simple modules and a simple module $S$ of even projective dimension $m$. Choose $m$ minimal such that a simple $A$-module with projective dimension $2m$ exists, then we show that the global dimension of $A$ is bounded by $n+m-1$.
This gives a combined generalisation of results of Gustafson \cite{Gus} and Madsen \cite{Mad}.
In \cite{Bro}, Brown proved that the global dimension of quasi-hereditary Nakayama algebras with $n$ simple modules is bounded by $n$. Using our result on the bounds of global dimensions of Nakayama algebras, we give a short new proof of this result and generalise Brown's result from quasi-hereditary to standardly stratified Nakayama algebras, where the global dimension is replaced with the finitistic dimension.
\end{abstract}

\maketitle
\section*{Introduction}
We always assume that our algebras are finite dimensional over a field $K$ and furthermore they are connected and non-semisimple if nothing is stated otherwise. We assume that all modules are finite dimensional right modules if nothing is stated otherwise.
Nakayama algebras are defined as algebras such that every indecomposable projective left or right module is uniserial.
See for example the books \cite{ARS}, \cite{SkoYam} and \cite{Zi} for sections on the basics and importance of Nakayama algebras.

In \cite{Gus}, Gustafson showed that a Nakayama algebra with $n$ simple modules and finite global dimension has global dimension at most $2n-2$ and in \cite{Mad}, Madsen showed that a Nakayama algebra has finite global dimension if and only if it has a simple module of even projective dimension. We combine and generalise those two results to the following, which is our first main result:
\begin{theorem*}
Let $A$ be a Nakayama algebra with a simple module $S$ of even projective dimension. Choose $m$ minimal such that a simple $A$-module has projective dimension equal to $2m$. Then the global dimension of $A$ is bounded by $n+m-1$.
\end{theorem*}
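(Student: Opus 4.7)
The plan is to analyze the minimal projective resolutions of simple modules using the uniserial structure of Nakayama algebras, exploiting the hypothesis that $2m$ is the minimum even projective dimension attained by a simple. Write the Kupisch series of $A$ as $(c_1,\ldots,c_n)$, where $c_j$ is the length of $P_j$. Every syzygy of a simple is uniserial, hence determined by its top $S_j$ and its length $r\le c_j$; I record such a module as the pair $(j,r)$. The first syzygy is $\Omega(j,r)=(j+r,\,c_j-r)$, with indices modulo $n$ when $A$ is cyclic, and is nonzero iff $r<c_j$. For a simple $S_i=(i,1)$, writing $\Omega^k(S_i)=(t_k,r_k)$, the recursion is $t_{k+1}=t_k+r_k$ and $r_{k+1}=c_{t_k}-r_k$.

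First I would dispose of the case $m=0$. Here some $c_j=1$, which means vertex $j$ has no outgoing arrow; after renumbering, $A$ is a linear Nakayama algebra $1\to 2\to\cdots\to n$. Since $r_k\ge 1$ always, the tops $t_k$ of the syzygies of $S_i$ strictly increase without wrap-around, so $t_k\ge i+k$; therefore $\pd(S_i)\le n-i\le n-1=n+m-1$, settling this case.

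For the main case $m\ge 1$, let $T$ be a simple realising $\pd(T)=N=\gldim(A)$ and let $S$ be a simple with $\pd(S)=2m$. Assuming $N\ge n+m$ for contradiction, I would examine the sequence of pairs $(t_k,r_k)$ for $k=0,1,\ldots,N$ along the syzygy sequence of $T$. These $N+1$ pairs are pairwise distinct, for otherwise $\Omega^k(T)\cong\Omega^{k'}(T)$ with $k<k'\le N$ would make $\Omega^k(T)$ periodic under $\Omega$ and hence of infinite projective dimension, contradicting $\pd(T)=N<\infty$. A pigeonhole on the tops $t_0,\ldots,t_N$, combined with the Kupisch recursion, then forces a "shortcut" between two syzygies sharing the same top at indices of matching parity; this shortcut would splice off to produce the minimal projective resolution of a simple of even projective dimension strictly less than $2m$, contradicting the minimality of $m$.

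The hard part will be making the shortcut argument precise. Since syzygies need not themselves be simple (their length $r_k$ can exceed $1$), a naive pigeonhole on tops alone only recovers Gustafson's bound $2n-2$. Improving it to $n+m-1$ requires tracking the full pair $(t_k,r_k)$ together with the parity of $k$, and using the explicit Kupisch recursion to pin down which repetitions actually generate a simple of even projective dimension below $2m$. Singling out these "usable" repetitions among the many possible ones — and showing that assuming $N\ge n+m$ forces at least one of them to occur — is the combinatorial core of the proof and is where the sharper bound beyond Gustafson and Madsen enters.
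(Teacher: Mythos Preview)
Your proposal has a genuine gap: the ``combinatorial core'' you describe is never carried out, and as stated it is not clear it can be. When two syzygies share a top, say $t_k=t_{k'}$ with $r_k\ne r_{k'}$, you get a short exact sequence relating the two uniserial modules, but the third term is another uniserial module, not a simple; so there is no obvious way to ``splice off'' a resolution of a \emph{simple} of even projective dimension $<2m$. A useful observation you did not exploit is that your own recursion gives $t_{k+2}=t_k+r_k+(c_{t_k}-r_k)=t_k+c_{t_k}$, so the even-indexed tops depend only on the previous even-indexed top. This is exactly (the Gustafson dual of) the map $\psi$ from \cite{Mad}, and it shows the even tops eventually enter a cycle of some length $n-d$. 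From here one can get $\gldim A\le 2d$ (Gustafson) and, separately, that the maximal even projective dimension is at most $2m+2(n-d-1)$; but neither bound alone beats $n+m-1$, and pigeonhole on $N+1\ge n+m+1$ tops does not isolate the right parity-matched repetition without already knowing $d$.

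The paper's proof avoids this combinatorics entirely by a short averaging trick. With $d$ the number of simples outside the $\psi$-cycle, one has $\gldim A\le 2d$ (dual Gustafson) and $\gldim A\le \max_{S\in\mathcal S^\psi}\pd(S)+1\le 2m+2(n-d-1)+1$ (walking around the $\psi$-cycle from the simple of projective dimension $2m$, using that both $\gldim A$ and $\gldim A-1$ occur as projective dimensions of simples). Adding the two inequalities eliminates $d$ and gives $2\gldim A\le 2n+2m-1$, hence $\gldim A\le n+m-1$. If you want to salvage your approach, the route is to recognise the $t_{k+2}=t_k+c_{t_k}$ recursion as defining a self-map on simples, identify its cycle, and then reproduce this two-bound-and-average argument; a direct pigeonhole/shortcut argument of the kind you sketch does not seem to reach the sharp bound.
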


In forthcoming work we discuss whether the bounds $n+m-1$ are optimal and attained, where we discover a connection with the classification of Nakayama algebras that are higher Auslander algebras.
Recall that the finitistic dimension $\findim(A)$ of an algebra $A$ is defined as the supremum of all projective dimensions of modules having finite projective dimension. It is one of the most famous conjectures in the representation theory of finite dimensional algebras, whether the finitistic dimension is always finite.

In \cite{Bro}, Brown showed that the global dimension of quasi-hereditary Nakayama algebras with $n$ simple modules is bounded by $n$.
Standardly stratified algebras were introduced as a generalisation of quasi-hereditary algebras. Indeed, it was proven in \cite{AHLU} that a standardly stratified algebra is quasi-hereditary iff it has finite global dimension.
In recent years there was much interest to give equalities and inequalities for the finitistic dimension of standardly startified algebras, see for example \cite{MazOv}, \cite{AHLU2}, \cite{Mar2} and \cite{Maz}.

We prove our second main result for standardly stratified Nakayama algebras:
\begin{theorem*}
Let $A$ be a standardly stratified Nakayama algebra with $n$ simple modules. Then $\findim(A) \leq n$.
\end{theorem*}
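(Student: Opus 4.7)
The plan is an induction on the number $n$ of simple modules, organized around whether the global dimension is finite. When $\gldim(A) < \infty$, the algebra $A$ is quasi-hereditary by \cite{AHLU}, so Brown's theorem---which this paper re-proves as a direct application of Theorem~1---gives $\gldim(A) \leq n$ and hence $\findim(A) = \gldim(A) \leq n$. Thus we may assume $\gldim(A) = \infty$ and carry out the inductive step.

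Re-indexing, we take the stratification order to be $1 < \dots < n$, so that $\Delta_n = P_n$. Set $B := A/Ae_nA$; standard theory for standardly stratified algebras yields that $B$ is again standardly stratified with $n-1$ simples and $\Delta_i^B = \Delta_i^A$ for $i < n$, and $B$ is Nakayama since quotients of Nakayama algebras are Nakayama (possibly disconnected, in which case the induction is applied to each block). The central structural input, specific to the uniserial Nakayama setting, is the following: the submodule $P_ie_nA \subseteq P_i$ generated by the topmost composition factor $S_n$ of $P_i$ is itself a uniserial module with top $S_n$, hence a quotient of $P_n = \Delta_n$ of length at most $\mathrm{length}(P_n)$. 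This constraint forces the $\Delta$-filtration of each $P_i$ to contain at most one copy of $\Delta_n$, occurring as the bottommost section; consequently $P_ie_nA \in \{0, P_n\}$ and $\pd_A(B) \leq 1$.

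To complete the induction, for an indecomposable uniserial $A$-module $M$ with $\pd_A(M) < \infty$ one uses the short exact sequence
\[
0 \to M e_n A \to M \to M/Me_nA \to 0
\]
together with the change-of-rings inequality: since $M/Me_nA$ is a $B$-module, $\pd_A(M/Me_nA) \leq \pd_B(M/Me_nA) + \pd_A(B) \leq (n-1) + 1 = n$. A syzygy analysis---using that the first syzygy of a non-projective quotient of $P_n$ is again a uniserial submodule of $P_n$, and iterating inside the standardly stratified structure---bounds $\pd_A(Me_nA) \leq n$ as well. Combining these two bounds in the long exact $\Ext$ sequence attached to the short exact sequence yields $\pd_A(M) \leq n$, so $\findim(A) \leq n$.

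The main obstacle is the syzygy analysis bounding $\pd_A(Me_nA)$: the cyclic Nakayama algebra with Kupisch series $[5,4]$ illustrates that $Me_nA$ may realize the extremal value $\findim(A) = n$, so a naive change-of-rings bound of the form $\pd_A(Me_nA) \leq \findim(B) + \pd_A(B)$ is insufficient. Instead one must exploit the uniserial combinatorics of $\Delta$-filtrations inside $P_n$ to propagate the bound through the $\Omega$-chain; this is where the full force of the standardly stratified structure, together with the rigidity of uniserial modules, must be brought to bear.
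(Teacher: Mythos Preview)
Your treatment of the quasi-hereditary case matches the paper's: finite global dimension forces $A$ quasi-hereditary, and then the bound $\gldim(A)\le n$ follows from the first main theorem (the paper's re-proof of Brown's bound).

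The infinite global dimension case, however, is where your proposal has a genuine gap. You set up the short exact sequence $0\to Me_nA\to M\to M/Me_nA\to 0$ and correctly bound the right-hand term via change of rings over $B=A/Ae_nA$. But the bound $\pd_A(Me_nA)\le n$ is never actually established: you describe it as a ``syzygy analysis\dots iterating inside the standardly stratified structure'' and then, in the final paragraph, explicitly flag it as ``the main obstacle'' and observe that the naive approach fails. That is an honest diagnosis, but it means the argument is incomplete precisely at the step that carries all the weight. Note that $Me_nA$ is a quotient of $P_n$ and is \emph{not} a $B$-module, so neither the inductive hypothesis nor the change-of-rings inequality applies to it; the syzygies of such a module wander through submodules of various $P_j$, and nothing in your outline explains why this process terminates within $n$ steps.

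The paper takes a completely different route here and obtains a much sharper conclusion. Rather than induction, it \emph{classifies} the non-selfinjective standardly stratified Nakayama algebras of infinite global dimension: one shows that exactly one simple has infinite projective dimension and all others have projective dimension~$1$, and that the Kupisch series is forced into a specific family. From Lemma~\ref{naklemma} one gets that $B=A/Ae_nA$ is hereditary, so Proposition~\ref{proplemma}(6) already yields $\findim(A)\le 3$; the final step from $3$ down to $2$ uses the parity result \cite[Proposition~2.2(a)]{Mad} together with the fact that every simple of finite projective dimension has $\pd=1$. Thus in the hard case the paper proves $\findim(A)\le 2$, not merely $\le n$, and avoids the unresolved syzygy bookkeeping entirely.
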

The proof uses our first main result on the bounds of the global dimension to give a short proof for quasi-hereditary Nakayama algebras.
We then classifiy the standardly stratified Nakayama algebras of infinite global dimension and look at their finitistic dimension to obtain our second main result.
We remark that a proof of Proposition \ref{propprojdim} was also obtained by Aaron Chan with more elementary methods. The second named author thanks Aaron Chan for useful discussions. We profited from the GAP package QPA, see \cite{QPA}, to calculate numerous examples.

\section{Preliminaries}
\subsection{General preliminaries}
Throughout $A$ is a finite dimensional, non-semisimple and connected algebra over a field $K$. We always work with finite dimensional right modules, if not stated otherwise. By $\mod A$, we denote the category of finite dimensional right $A$-modules and $J$ denotes the Jacobson radical of an algebra $A$.
As usual, $D:=\Hom_K(-,K)$ denotes the $K$-duality of an algebra $A$ over the field $K$.
For background on representation theory of finite dimensional algebras and their homological algebra, we refer to \cite{ARS} and \cite{SkoYam}.
%For a module $M$, $\add(M)$ denotes the full subcategory of $\mod A$ consisting of direct summands of $M^n$ for some $n \geq 1$. \comm{Do we need this definition?}
%A module $M$ is called \emph{basic} in case $M \cong M_1 \oplus M_2 \oplus ... \oplus M_n$, where every $M_i$ is indecomposable and $M_i$ is not isomorphic to $M_j$ for $i \neq j$. The \emph{basic version} of a module $N$ is the unique (up to isomorphim) module $M$ such that $add(M)=add(N)$ and such that $M$ is basic.
An algebra is called \emph{basic} in case the regular module does not contain a projective module of the form $P^2$ as a direct summand for an indecomposable projective module $P$. Every algebra is Morita equivalent to a basic algebra and we thus assume that all our algebras are basic if nothing is stated otherwise. Note that all the homological notions in this text are invariant under Morita equivalence and thus it is no restriction on the generality of our results to assume that our algebras are basic.
For a fixed set of primitive orthogonal idempotents $e_1,e_2 ,\dots, e_n$ with $1= e_1 + e_2 + \dots + e_n$, we denote by $S_i=e_iA/e_iJ$, $P_i=e_i A$ and $I_i=D(Ae_i)$  the simple, indecomposable projective and indecomposable injective module, respectively, corresponding to the primitive idempotent $e_i$, for $1 \leq i \leq n$. The \emph{finitistic dimension} $\findim(A)$ of an algebra $A$ is defined as the supremum of all projective dimensions of modules having finite projective dimension.
The \emph{global dimension} is defined as the supremum of all projective dimensions of modules. Thus the global dimension coincides with the finitistic dimension in case the global dimension is finite.

An algebra $A$ is called \emph{Nakayama algebra} in case every indecomposable left or right module is uniserial. We refer to \cite{AnFul} and \cite{SkoYam} for results on Nakayama algebras which we collect in the following without proof.
A Nakayama algebra either has no simple projective module or it has a unique simple projective module. In the last case the algebra is triangular and hence the global dimension is bounded by $n-1$. If the Nakayama algebra is not triangular and has $n$ simple modules, it is possible to order the primitive idempotents such that there are projective covers $$e_iA \to e_{i-1}J,$$ for $2 \leq i \leq n$, and a projective cover $$e_1A \to e_nJ.$$ Fix such an order $e_1,e_2 ,\dots, e_n$ of a primitive orthogonal idempotents. This order is uniquely defined up to a cyclic permutation. We also have $\tau(S_i) \cong S_{i+1}$, for $1 \leq i \leq n-1$, and $\tau(S_n) \cong S_1$, where $\tau$ denotes the Aulander-Reiten translate. The Nakayama algebra $A$ is uniquely determined by the length $c_i$ of the indecomposable projective modules $e_i A$. The sequence $[c_1, c_2, \dots, c_n]$ is called the \emph{Kupisch series} for $A$. One can show that $c_{i+1} \geq c_i -1$, for all $2 \leq i \leq n$, and $c_1 \geq c_n-1$. Conversely, any sequence of integers greater or equal than $2$ satisfying those requirements is the Kupisch series for some Nakayama algebra.  We look at the indices $i$ of the $c_i$ modulo $n$ so that $c_i$ is defined for all $i \in \mathbb{Z}$. A Nakayama algebra is selfinjective if and only if its Kupisch series is constant. In case the algebra is not selfinjective, then after a cyclic reordering of the indices one can always get $c_1 = c_n-1$ with $c_1$ minimal among the $c_i$.
Every indecomposable module of a Nakayama algebra is isomorphic to a module of the form $e_i A/e_i J^k$.
For explicit calcutions of minimal projective resolutions or injective coresolutions in Nakayama algebras, see for example \cite{Mar}.

The following lemma is a direct consequence of the classification of Nakayama algebras by their diagrams, see chapter 10.3. of \cite{DK} (note that in this textbook Nakayama algebras are called serial algebras). In Theorem 10.3.1. of \cite{DK} it is proven that the diagram of a Nakayama algebra is either a directed line or a directed cycle.
\begin{lemma}\label{naklemma}
Let $A$ be a connected Nakayama algebra having no simple projective module, and let $e \in A$ be a primitive idempotent. Then $A/AeA$ is a connected Nakayama algebra of finite global dimension. The algebra $eAe$ is also a connected Nakayama algebra which is semisimple if and only if the length of $eA$ as an $A$-module is less or equal than $n$. If the algebra $eAe$ is not semisimple, then $A/AeA$ is hereditary.
\end{lemma}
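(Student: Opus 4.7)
The plan is to use the explicit quiver-with-relations description for Nakayama algebras recalled above. Since $A$ is connected Nakayama with no simple projective module, its quiver $Q$ is a directed cycle on $n$ vertices, and the ideal of relations is generated by the length-$c_j$ paths starting at each vertex $j$. One may assume $e=e_i$ for some $i$, and each of the three assertions then reduces to an elementary observation about this combinatorial data.

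For $A/Ae_iA$, deleting the vertex $i$ in the cycle $Q$ leaves a directed path on the remaining $n-1$ vertices, which is connected; hence $A/Ae_iA$ is a Nakayama algebra with a line quiver, so it is triangular and of finite global dimension. For $eAe=e_iAe_i$, I would count the paths from $i$ back to $i$: such a path has length a multiple of $n$, and is nonzero in $A$ precisely when its length is at most $c_i-1$. Therefore $\dim_K(e_iAe_i)=\lfloor(c_i-1)/n\rfloor+1$, so $e_iAe_i$ is a local Nakayama algebra of the form $K[x]/(x^m)$. It is semisimple exactly when this dimension equals $1$, equivalently $c_i\leq n$, i.e.\ exactly when the length of $eA$ is at most $n$.

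The main technical step, and the place I expect the argument to need real work, is the hereditary claim. I would show that when $c_i\geq n+1$ every defining relation of $A$ is automatically killed in $A/Ae_iA$ because its underlying path passes through the vertex $i$. Writing $j\equiv i+k \pmod{n}$ with $0\leq k\leq n-1$, the length-$c_j$ path starting at $j$ visits $i$ precisely when $c_{i+k}\geq n-k$ (the case $k=0$ being trivial). Iterating the Kupisch inequality $c_{\ell+1}\geq c_\ell-1$ yields $c_{i+k}\geq c_i-k\geq (n+1)-k>n-k$, so the condition holds for every $k\geq 1$. Hence every relation is redundant in $A/Ae_iA$, which is therefore the path algebra of a directed line and consequently hereditary.
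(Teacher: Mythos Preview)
Your argument is correct and is precisely the explicit unpacking of what the paper leaves implicit: the paper does not spell out a proof at all but simply declares the lemma ``a direct consequence of the classification of Nakayama algebras by their diagrams'' in \cite{DK}, i.e.\ the fact that the quiver is a cycle. Your proof carries out exactly that reduction---deleting the vertex $e_i$ turns the cycle into a line (giving a triangular quotient), counting closed paths at $i$ identifies $e_iAe_i$ with $K[x]/(x^m)$ where $m=\lfloor(c_i-1)/n\rfloor+1$, and the Kupisch inequality forces every defining relation to traverse $i$ once $c_i\geq n+1$---so there is no genuine methodological difference, only that you have supplied the details the paper outsources to the reference.
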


\subsection{Preliminaries on standardly stratified algebras}
See \cite{DR} for an introduction to quasi-hereditary algebras and \cite{Rei}, \cite{ADL}, \cite{FrMa}, for the basics of standardly stratified algebras. We just briefly recall the most important definitions.
Let $(A,E)$ be an algebra together with an ordered complete sequence of primitive orthogonal idempotents $E=(e_1,e_2,\dots,e_n)$. Then the sequence of standard right $A$-modules is defined by $\Delta=(\Delta(1),\dots,\Delta(n))$, where $\Delta(i)=e_iA/e_iJ(e_{i+1}+e_{i+1}+\dots+e_n)A$ for $1 \leq i \leq n$ with $\Delta(n)=e_n A$.
The sequence of proper standard right $A$-modules $\bar{\Delta}$ is defined by $\bar{\Delta}=(\bar{\Delta}(1),\dots,\bar{\Delta}(n))$, where $\bar{\Delta}(i)=e_iA/e_iJ(e_i+e_{i+1}+\dots+e_n)A$ for $1 \leq i \leq n$. Dually, one can define left standard modules $\Delta^{o}(i)$ and left proper standard modules $\bar \Delta^{o}(i)$. The costandard modules $\nabla(i)$ and proper costandard modules $\bar{\nabla}(i)$ are then defined as the modules $D(\Delta^{o}(i))$ and $D(\bar \Delta^{o}(i))$. For a set of modules $C$, let $\mathcal F(C)$ be the full subcategory of $\mod A$ of all modules $M$ with a filtration $0 \subseteq M_s \subseteq \dots \subseteq M_1=M$, such that every subquotient is isomorphic to an object in $C$. A module is called \emph{proper standardly filtered} in case $M \in \mathcal F(\bar{\Delta})$ with $\bar{\Delta}:= \{ \bar{\Delta}(1), \dots, \bar{\Delta}(n) \}$. The algebra $A$ is called \emph{standardly stratified} in case $A \in \mathcal F(\bar{\Delta})$. In view of \cite{AHLU}, Theorem 2.4, we define $A$ to be \emph{quasi-hereditary}, in case it is standardly stratified and has finite global dimension. In case $A$ and $A^{\mathrm{op}}$ are both standardly stratified, then $A$ is called \emph{properly stratified} (here we use a characterisation of properly stratified algebras found in \cite{Rei} after theorem 3.6.).

We remark that if a Nakayama algebra is standardly stratified, the order of primitive idempotents defining the Kupisch series does not have to coincide with the sequence $E$ defining the standardly stratified structure.

The next proposition collects several results from the literature that we will need in this article.

\begin{proposition} \label{proplemma}
Let $A$ be a finite dimensional algebra.
\begin{enumerate}
\item Let $e$ be a primitive idempotent of an algebra $A$ such that $AeA$ is projective as a right $A$-module. In case an $A$-module $M$ has finite projective dimension, then the $eAe$-module $Me$ is projective.
\item Let $A$ be a standardly stratified algebra. Then $Ae_n A$ is projective as a right $A$-module, and $A/Ae_nA$ is again standardly stratified.
\item A Nakayama algebra with no simple projective module is quasi-hereditary if and only if there is a simple module of projective dimension equal to two.
\item Let $e$ be an idempotent such that $AeA$ is projective as a right $A$-module and let $X$ be an $A/AeA$-module.
Then $\pd_A(X) \leq \pd_{A/AeA}(X)+1$.
\item  Let $e$ be an idempotent such that $AeA$ is projective as a right $A$-module and the algebra $eAe$ is semisimple.
Then $\gldim(A) \leq \gldim(A/AeA)+2$.
\item Let $e$ be a primitive idempotent such that $AeA$ is projective as a right $A$-module. Suppose $A/AeA$ has finitistic dimension equal to $k$. Then $A$ has finitistic dimension at most $k+2$.
\end{enumerate}
\end{proposition}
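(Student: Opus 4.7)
The proposition collects six standard facts; my plan is to group them by the type of argument.

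Part (2) is direct bookkeeping from the definition of standardly stratified algebras: the top idempotent $e_n$ satisfies $\Delta(n) = e_nA$ (projective), and the corresponding condition on the left-hand filtrations forces $Ae_n$ to be a free $e_nAe_n$-module, whence $Ae_nA\cong Ae_n\otimes_{e_nAe_n}e_nA$ is a direct sum of copies of $e_nA$, in particular projective as a right $A$-module. The $\bar\Delta$-filtration of $A$ descends to show $A/Ae_nA$ is again standardly stratified for the truncated order; see \cite{ADL} and \cite{Rei}.

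Parts (4), (5), (6) all rest on the recollement associated with a stratifying idempotent. The key input is $\pd_A(A/AeA)\leq 1$, which follows from the short exact sequence $0\to AeA\to A\to A/AeA\to 0$ together with the hypothesis that $AeA$ is projective on the right. Lifting an $(A/AeA)$-projective resolution of $X$ to $A$ then yields (4). For (5), I take any $A$-module $M$ and use the short exact sequence $0\to M\cdot AeA\to M\to M/M\cdot AeA\to 0$; (4) bounds $\pd_A$ of the cokernel, while semisimplicity of $eAe$ forces $Me$ to be projective over $eAe$, so $M\cdot AeA\cong Me\otimes_{eAe}eA$ is projective over $A$. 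The long exact sequence then gives $\pd_A(M)\leq\gldim(A/AeA)+2$. For (6), the same argument, combined with (1) in place of the semisimplicity hypothesis, gives $\findim(A)\leq\findim(A/AeA)+2$.

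Part (1) is the technical heart: applying $(-)\cdot e$ to a finite $A$-projective resolution of $M$ produces a finite complex of $eAe$-projectives computing $\operatorname{Tor}^A_*(M,Ae)$, which vanishes in positive degree because $AeA$ is projective on the right; minimality of the resolution together with $e$ being primitive (so $eAe$ is local) then forces $Me$ itself to be projective over $eAe$. Part (3) is obtained by combining \cite{Mad} (a Nakayama algebra has finite global dimension iff some simple has even projective dimension) with a direct analysis of the standard modules: by uniseriality they are determined by the Kupisch series, and a check shows quasi-heredity is equivalent to the existence of a simple of projective dimension exactly two. The anticipated main obstacle is (1), specifically the rigorous identification of the image complex as an honest projective resolution over $eAe$; once that is settled the remaining parts assemble quickly.
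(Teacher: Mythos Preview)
The paper's own proof of this proposition is purely by citation: (1), (2), (6) are referred to \cite{AHLU2}, (3) to \cite{UY}, and (4), (5) to \cite{APT}. Your proposal instead supplies argument sketches, which is a genuinely different and more informative route. Most of your outlines are sound, but two points need correction.

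For (1), your reason for the vanishing of $\operatorname{Tor}^A_{>0}(M,Ae)$ is misplaced: the functor $(-)e \cong \Hom_A(eA,-)$ is exact for \emph{any} idempotent $e$, simply because $eA$ is projective; this has nothing to do with $AeA$ being projective on the right. So $P_\bullet e$ is automatically a finite $eAe$-projective resolution of $Me$ whenever $\pd_A M<\infty$. The substantive step is that $eAe$, being local, has finitistic dimension zero, so any $eAe$-module of finite projective dimension is free --- this is what your phrase ``minimality together with $e$ primitive'' is really encoding. In particular the hypothesis that $AeA$ be projective plays no role in (1) as stated; it is present only because that is the ambient setting of \cite[Lemma~2.4]{AHLU2}.

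For (5), the asserted isomorphism $M\!\cdot\! AeA \cong Me\otimes_{eAe} eA$ is not automatic: the multiplication map $Me\otimes_{eAe} eA \to M$ is always surjective onto $MeA = M\!\cdot\! AeA$, but injectivity needs an argument that you do not supply. Under the hypothesis that $eAe$ is semisimple, $Me\otimes_{eAe} eA$ is indeed $A$-projective, and one checks that the kernel is annihilated by $AeA$, hence is an $A/AeA$-module; applying (4) to the kernel and then running the two short exact sequences still yields $\pd_A M \le \gldim(A/AeA)+2$. So the gap is patchable, but the isomorphism as written is unjustified; the treatment in \cite{APT} proceeds differently.
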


\begin{proof}
\begin{enumerate}
\item See \cite{AHLU2}, Lemma 2.4.
\item See \cite{AHLU2} in the part above Proposition 1.1.
\item This is part of Proposition 3.1 of \cite{UY}.
\item This is a special case of Lemma 5.8 of \cite{APT}.
\item This is a special case of Theorem 5.4 of \cite{APT}.
\item See \cite{AHLU2}, Theorem 2.2.
\end{enumerate}
\end{proof}

\section{Upper bounds of the global dimension for Nakayama algebras}

In this section $A$ will always denote a Nakayama algebra.

Suppose $A$ has no simple projective module.
Let $\mathcal S$ denote a complete set of representatives of the isomorphism classes of simple $A$-modules. Following \cite{Mad}, we define a function $\Psi \colon \mathcal S \to \mathcal S$ by
$\psi (S) \cong (\tau^{-1})^{w(S)} S$, where $w(S)$ is the length of the injective envelope of $S$, for each $S \in \mathcal S$. We say that $S \in \mathcal S$ is \emph{$\psi$-regular} if $\psi^r(S)=S$ for some $r \geq 1$. Dual definitions were earlier considered in \cite{Gus}.

In \cite{Mad} we find the following criteria for finite global dimension.

\begin{theorem}[{\cite[Theorem 3.3]{Mad}}] \label{old}
Let $A$ be a Nakayama algebra having no simple projective module. The following are equivalent.
\begin{itemize}
    \item[(a)] $A$ has finite global dimension.
    \item[(b)] The set of $\psi$-regular simple $A$-modules is exactly the set of simple $A$-modules with even projective dimension, and $\psi$ is a cyclic permutation on this set.
    \item[(c)] There is a simple $A$-module with even projective dimension.
\end{itemize}
\end{theorem}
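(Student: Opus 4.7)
The plan is to prove the three equivalences via the chain $(a) \Rightarrow (c) \Rightarrow (b) \Rightarrow (a)$, resting on a single technical lemma that relates the map $\psi$ to the structure of minimal projective resolutions of simples. The lemma to establish is the following: for a simple $A$-module $S$ of finite even projective dimension, $\psi(S)$ is again a simple of finite even projective dimension, and $\psi$ restricts to an injection on the set of such simples. The proof of the lemma unwinds the minimal projective resolution of $S_i$ using the Kupisch series: $\Omega S_i = J P_i$ is uniserial of length $c_i - 1$ with top some $S_j$, so $\Omega^2 S_i = J^{c_i - 1} P_j$; iteratively tracking the tops and lengths of successive syzygies one verifies that the $w(S_i)$-fold iteration of $\tau^{-1}$ corresponds to a clean syzygy step along which the projective dimension strictly decreases when positive.

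Granted the lemma, the three implications become short. For $(a) \Rightarrow (c)$, if $\gldim(A) < \infty$ and no simple had even projective dimension, then iterating $\psi$ on a simple of maximal odd projective dimension would eventually yield a simple of projective dimension $1$, whose first syzygy must be a nonzero projective; in the uniserial Nakayama setting this forces that syzygy to be indecomposable projective, and the Kupisch arithmetic combined with the no-simple-projective hypothesis produces a contradiction. For $(c) \Rightarrow (b)$, starting from a simple of even projective dimension the lemma produces a $\psi$-orbit of simples of even projective dimension; finiteness of $\mathcal S$ and injectivity of $\psi$ close this orbit into a single cycle, and one verifies that every simple of even projective dimension belongs to this cycle. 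For $(b) \Rightarrow (a)$, if the simples of even projective dimension form a $\psi$-cycle they all have finite projective dimension; the remaining simples also have finite projective dimension because the composition factors of their first syzygies, being uniserial, must include a member of the cycle, and then a short exact sequence argument propagates finiteness, giving $\gldim(A) < \infty$.

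The main obstacle will be the key lemma, specifically matching the module-theoretic definition $\psi(S) = (\tau^{-1})^{w(S)} S$ to the concrete syzygy shift derived from the Kupisch series. For selfinjective Nakayama algebras one has $\tau^{-1} \cong \Omega^{-2}$ in the stable category and the identification is classical; in the general non-selfinjective case one must verify that the intermediate modules encountered while iterating $\tau^{-1}$ stay in a subcategory where the operator behaves predictably, and that the combinatorial identities on the $c_i$ align with the length $w(S)$ of the injective envelope $I(S)$. Once this bookkeeping is settled, the theorem reduces to the clean cyclic dynamics of $\psi$ described above.
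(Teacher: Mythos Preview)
The paper does not prove this theorem; it is quoted verbatim as \cite[Theorem 3.3]{Mad} and used as a black box in the proof of Theorem~\ref{mainresult}. There is therefore no proof in the paper to compare your proposal against, and I can only evaluate the proposal on its own terms.

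Your overall strategy---relating $\psi$ to the syzygy operator and tracking parity of projective dimension---is indeed the core of Madsen's original argument, and your key lemma is essentially \cite[Proposition~3.2]{Mad}. But your implication $(a)\Rightarrow(c)$ contains a genuine error. You claim that arriving at a simple $S$ with $\pd S=1$ yields a contradiction via ``the Kupisch arithmetic combined with the no-simple-projective hypothesis''. It does not. If $\pd S_i=1$ then $\Omega S_i=\rad P_i$ is indecomposable projective, hence isomorphic to $P_{i+1}$, which forces only $c_{i+1}=c_i-1$. This is fully consistent with the Kupisch inequalities $c_{i+1}\geq c_i-1$ and with $c_j\geq 2$ for all $j$; for example the algebra with Kupisch series $[2,3]$ has $\pd S_2=1$ and no simple projective. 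Simples of projective dimension one are ubiquitous in cyclic Nakayama algebras and contradict nothing. Moreover, the direction of your iteration is backwards: the relation used later in the paper (via \cite[Proposition~3.2(b)]{Mad}) is $\pd(\psi(S))\leq\pd(S)+2$, so iterating $\psi$ tends to \emph{increase} projective dimension rather than drive it down to~$1$.

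Your $(b)\Rightarrow(a)$ step has a similar gap: the assertion that for $S$ outside the $\psi$-cycle the first syzygy $\Omega S$ ``must include a member of the cycle'' among its composition factors is unjustified---a uniserial module of length $c_i-1<n$ need not contain any prescribed simple. The correct mechanism is that every simple lands in $\mathcal S^\psi$ after finitely many applications of $\psi$ (automatic on a finite set), and then the precise relation between $\pd(S)$ and $\pd(\psi(S))$ pulls finiteness back along the orbit.
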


Denote by $\mathcal S^\psi \subseteq \mathcal S$ the set of $\psi$-regular simple $A$-modules.

\begin{theorem} \label{mainresult}
Let $A$ be a Nakayama algebra with a simple module $S$ of even projective dimension. Choose $m$ minimal such that a simple $A$-module has projective dimension equal to $2m$. Then the global dimension of $A$ is bounded by $n+m-1$.
\end{theorem}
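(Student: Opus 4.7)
I would prove this by induction on $n$, using Proposition~\ref{proplemma}(5) and Lemma~\ref{naklemma} as the main reduction. If $m = 0$, then a simple $A$-module is projective, so $A$ is triangular and $\gldim(A) \leq n - 1 = n + m - 1$, handling the base case. For the inductive step with $m \geq 1$, the goal is to exhibit a primitive idempotent $e = e_i$ of $A$ such that (i) $Ae_iA$ is projective as a right $A$-module, (ii) $c_i \leq n$, so that $e_iAe_i$ is semisimple by Lemma~\ref{naklemma}, and (iii) the quotient $A/Ae_iA$, which by Lemma~\ref{naklemma} is a connected Nakayama algebra of finite global dimension with $n - 1$ simples, has minimum half-even projective dimension $m' \leq m - 1$. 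Granted (i)--(iii), the inductive hypothesis applied to $A/Ae_iA$ yields $\gldim(A/Ae_iA) \leq (n-1) + m' - 1 \leq n + m - 3$, and then Proposition~\ref{proplemma}(5) gives $\gldim(A) \leq \gldim(A/Ae_iA) + 2 \leq n + m - 1$. Notice that the case $m = 1$ specializes exactly to Brown's bound for quasi-hereditary Nakayama algebras, consistent with the paper's promise of a short new proof of it.

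\textbf{Candidate idempotent.} The natural candidate is $e = e_i$ with $\pd(S_i) = 2m$, guided by the $\psi$-orbit structure of Theorem~\ref{old}, which says that the simples of even projective dimension form a single cyclic orbit under $\psi$. Property (iii) should follow from the explicit Nakayama syzygy formulas for $\Omega(S_i)$ and $\Omega^2(S_i)$ (computable from the Kupisch series) together with Proposition~\ref{proplemma}(1) and~(4), which relate projective dimensions in $A$, $A/Ae_iA$, and $e_iAe_i$: heuristically, $\Omega^2(S_i)$ in $A$ should descend to a module in $A/Ae_iA$ of projective dimension $2m - 2$, producing there a simple of even projective dimension at most $2(m - 1)$. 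Property (ii) is a combinatorial consequence of the Kupisch inequalities $c_{j+1} \geq c_j - 1$ together with $\pd(S_i) = 2m$ and Gustafson's bound $m \leq n - 1$.

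\textbf{Main obstacle.} The most delicate step is (i), verifying that $Ae_iA$ is right projective; this is not automatic and depends sensitively on the position of $S_i$ in the cyclic Nakayama quiver. For example, for the algebra with Kupisch series $[3,2]$ one computes directly that $Ae_2 A \cong P_2 \oplus P_2$ is projective (when $\pd(S_2) = 2 = 2m$) while $Ae_1 A \cong P_1 \oplus S_1$ is not projective (when $\pd(S_1) = 1$), so only the ``right'' idempotent works. Establishing (i) in general would require exploiting the extremality of $S_i$ within the $\psi$-orbit of Theorem~\ref{old} to decompose the two-sided ideal $Ae_iA$ as a direct sum of copies of $P_i$ (possibly shifted along the syzygy chain of $S_i$). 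This combinatorial decomposition is where I expect the main technical work of the proof to lie, and it is presumably the content of the auxiliary proposition (mentioned in the paper as also obtained by Aaron Chan via more elementary methods) that the argument invokes.
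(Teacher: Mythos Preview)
Your inductive scheme has a genuine gap: the key hypothesis (i) that $Ae_iA$ is projective for the idempotent $e_i$ with $\pd(S_i)=2m$ is false in general, and in fact for some Nakayama algebras of finite global dimension \emph{no} primitive idempotent has this property, so the APT reduction of Proposition~\ref{proplemma}(5) simply cannot be launched. Take the Nakayama algebra with Kupisch series $[3,4,4]$, so $n=3$. One computes $\pd(S_1)=4$, $\pd(S_2)=3$, $\pd(S_3)=1$, hence $m=2$ and $\gldim A=4=n+m-1$. The only candidate is $e_1$, but $e_2Ae_1A=e_2J^2$ has length~$2$ with top $S_1$, while $P_1$ has length~$3$, so $Ae_1A$ is not right projective. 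Checking the other two idempotents, $e_1Ae_2A=e_1J$ has length~$2$ (while $P_2$ has length~$4$) and $e_1Ae_3A=e_1J^2\cong S_3$ (while $P_3$ has length~$4$), so none of $Ae_1A$, $Ae_2A$, $Ae_3A$ is projective. In particular the algebra is not quasi-hereditary (consistently, no simple has projective dimension~$2$), and there is no ``heredity-type'' idempotent to strip off. Your remark that the difficulty is ``presumably the content of the auxiliary proposition \dots\ obtained by Aaron Chan'' is also a misreading: that proposition is Proposition~\ref{propprojdim}, which concerns standardly stratified Nakayama algebras of infinite global dimension and plays no role in the proof of Theorem~\ref{mainresult}.

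The paper's proof is not inductive at all. It is a direct counting argument based on the $\psi$-orbit structure of Theorem~\ref{old}. Let $d$ be the number of simple modules that are not $\psi$-regular. On one hand, dualising Gustafson's argument gives $\id(S)\le 2d$ for every simple $S$, hence $\gldim A\le 2d$. On the other hand, the $\psi$-regular simples form a single cycle of length $n-d$ containing the simple $S'$ with $\pd(S')=2m$, and repeated use of \cite[Proposition~3.2(b)]{Mad} bounds their projective dimensions by $2m+2(n-d-1)$; since some simple of maximal projective dimension is $\psi$-regular (or differs from one by~$1$), this yields $\gldim A\le 2m+2n-2d-1$. Adding the two inequalities and dividing by~$2$ gives $\gldim A\le n+m-1$. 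No idempotent reduction is needed.
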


\begin{proof}
If $m=0$, then $A$ is triangular and the global dimension is bounded by $n-1$.

Suppose $m >0$. By Theorem \ref{old}, the global dimension of $A$ is finite. We have $$\gldim A=\max_{S \in \mathcal S} \id(S) = \max_{S \in \mathcal S} \pd(S)$$ and also $$\gldim A \leq \max_{S \in \mathcal S^\psi} \pd(S)+1.$$ This inequality follows from Theorem \ref{old} and the fact that if the global dimension of $A$ is $g$, then there exist a simple module of projective dimension $g$ and a simple module of projective dimension $g-1$.

Let $d$ be the number of simple $A$-modules that are not $\psi$-regular. Then for any $S \in \mathcal S$ we have that $\psi^d(S)$ is $\psi$-regular, and hence by dualising the main argument from \cite{Gus} we get $\id (S) \leq 2d$. So $$\gldim A \leq 2d.$$

Let $S'$ be a simple module with $\pd(S')=2m$. Then $\mathcal S^\psi=\{S', \psi(S'), \dots, \psi^{n-d-1}(S') \}$. It follows from repeated use of \cite[Proposition 3.2(b)]{Mad} that $\max_{S \in \mathcal S^\psi} \pd(S) \leq 2m+2(n-d-1)=2m+2n-2d-2$. Hence $$\gldim A \leq 2m+2n-2d-1.$$

Adding the two inequalities together, we get $$2 \cdot \gldim A \leq 2m+2n-1.$$ Since $\gldim A$ must be an integer, we conclude that $$\gldim A \leq m+n-1.$$
\end{proof}

We will discuss whether the bounds $n+m-1$ for given $m$ are attained in forthcoming work, where this is related to the classification of higher Auslander algebras with high global dimension inside the class of Nakayama algebras.
We just give one example for  $m=1$.

\begin{example} \label{gldimnexample}
Let $A$ be the Nakayama algebra with Kupisch series $[2,2,2,\dots,2,3]$, which has $n$ simple modules and all but one indecomposable projective module have length two.
Then the simple module $S_i$ has projective dimension $n-i+1$, for $1 \leq i \leq n$, and hence the global dimension is $n$. The simple module $S_{n-1}$ has projective dimension $2$, and thus $m=1$. As a consequence, the bound $n+m-1$ is attained in case $m=1$.
\end{example}

\section{Standardly stratified Nakayama algebras and their finitistic dimension}
This section gives bounds on the finitistic dimension of standardly stratified Nakayama algebras. We can assume that all algebras involved are not selfinjective as the next proposition shows. We note that the next proposition is a generalisation of the main result in \cite{AC}, where the authors proved the same result with the additional assumption that $A$ is a Nakayama algebra. We note that it seems that the authors in \cite{AC} forgot to look at the local case, but their argument works with nearly the same proof in the general case.

\begin{lemma}
Let $A$ be a selfinjective algebra. Then $A$ is standardly stratified if and only if $A$ is local. $A$ is never quasi-hereditary.
\end{lemma}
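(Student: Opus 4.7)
The converse of the biconditional is immediate: if $A$ is local then $n=1$, so $\bar{\Delta}(1)=A/J=S_1$, and since every composition factor of the basic local algebra $A$ equals $S_1$, trivially $A\in\mathcal{F}(\bar{\Delta})$. The final assertion is equally quick, since any non-semisimple selfinjective algebra has infinite global dimension (some simple module is not projective, hence has infinite projective dimension because over a selfinjective algebra only projectives have finite projective dimension) and hence cannot be quasi-hereditary.

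For the substantive direction I would argue by contradiction: suppose $A$ is selfinjective, standardly stratified, and $n\geq 2$. By Proposition~\ref{proplemma}(2), $Ae_nA$ is projective as a right $A$-module. Because this ideal is two-sided it splits along $A=\bigoplus_i P_i$ as $Ae_nA = \bigoplus_i e_iAe_nA = \bigoplus_i \operatorname{tr}_{P_n}(P_i)$, with the summand at $i=n$ equal to $P_n$. Each $\operatorname{tr}_{P_n}(P_i)$ is then a projective submodule of the indecomposable projective $P_i$; the key input is selfinjectivity, which makes projective modules injective, so any such submodule is a direct summand of $P_i$ and therefore equals $0$ or $P_i$. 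For $i\neq n$, the alternative $\operatorname{tr}_{P_n}(P_i)=P_i$ is excluded because the trace is a sum of images of maps $P_n\to P_i$, and since $P_i$ is local such a sum equals $P_i$ only when some individual map is surjective, forcing the impossibility $\top(P_n)=\top(P_i)$. Hence $\operatorname{tr}_{P_n}(P_i)=0$ and $e_iAe_n=0$ for every $i\neq n$, whence $Ae_n=e_nAe_n$.

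The remaining task is to promote this one-sided vanishing to centrality of $e_n$, which I would do via duality. Viewed as a left $A$-module, $Ae_n$ has every composition factor equal to the simple left module at $n$ (since $e_jAe_n=0$ for $j\neq n$), and as $A$ is selfinjective the module $Ae_n$ is indecomposable injective; it therefore coincides with the left injective envelope $D(e_nA)$ of that simple, giving $\dim Ae_n = \dim e_nA$. Combined with the inclusion $Ae_n=e_nAe_n\subseteq e_nA$, this forces $e_nA=e_nAe_n$, i.e.\ $e_nAe_j=0$ for $j\neq n$ as well. A short calculation then yields $ae_n=e_nae_n=e_na$ for every $a\in A$, so $e_n$ is a non-trivial central idempotent of the connected algebra $A$, the desired contradiction. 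The principal obstacle is precisely this last step: Proposition~\ref{proplemma}(2) is inherently one-sided, so one must invoke selfinjectivity a second time (through $K$-duality) to symmetrise the vanishing.
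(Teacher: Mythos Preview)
Your argument is correct, but it follows a different route from the paper. The paper invokes an external result (\cite{PR}, Proposition~1.3) that every standard module $\Delta(i)$ has finite projective dimension; selfinjectivity then forces each $\Delta(i)=e_iA/e_iJ\epsilon_{i+1}A$ to be projective, hence $e_iJ\epsilon_{i+1}A=0$ for all $i$. Specialising to $i=1$ and combining connectedness with the Nakayama permutation yields two distinct indecomposable projectives with the same simple socle, a contradiction. You instead use only the single ingredient already recorded in Proposition~\ref{proplemma}(2), namely projectivity of $Ae_nA$, split it as $\bigoplus_i \operatorname{tr}_{P_n}(P_i)$, and exploit selfinjectivity in the form ``projective $=$ injective'' to force $e_iAe_n=0$ for $i\neq n$; the second invocation of selfinjectivity, via $K$-duality and a dimension count, then symmetrises this to $e_nAe_i=0$, making $e_n$ central. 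Your approach is more self-contained relative to the paper's preliminaries and isolates the obstruction as a disconnection of the algebra; the paper's approach is shorter once the cited result on $\pd\Delta(i)$ is granted, and gets the vanishing $e_iJe_j=0$ for all $i<j$ in one stroke rather than focusing on the single idempotent $e_n$.
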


\begin{proof}
Since the first syzygy $\Omega^{1}$ is an equivalence on the stable module category $\underline{\mod A}$ (see for example section IV.8 in \cite{SkoYam}), every indecomposable non-projective module $M$ has infinite projective dimension. But every standard module $\Delta(i)$, $1 \leq i \leq n$, has finite projective dimension by Proposition 1.3 of \cite{PR}, and thus every $\Delta(i)$ is projective and hence also injective because $A$ is selfinjective. By the definition of standardly stratified algebras, $\Delta(i)=e_iA/(e_iJ \epsilon_{i+1}A)$ with $\epsilon_{i+1}:=e_{i+1}+e_{i+2}+ \dots +e_n$. Thus $\Delta(i)=e_iA/(e_iJ \epsilon_{i+1}A)$ is projective for every $1 \leq i \leq n$ if and only if $(e_iJ \epsilon_{i+1}A)=0$ for every $1 \leq i \leq n$.

Assume the algebra has at least two simple modules and take $i=1$. Then the condition $(e_1J \epsilon_{2}A)=0$ together with our assumption that $A$ is connected implies that $e_1 Je_i=0$ for all $i >1$ and that there is a $j >1$ with $e_j J e_1 \neq 0$. Now this implies that $e_1A$ has socle isomorphic to the socle of $D(A e_1)$ and also $e_j A$ has socle isomorphic to the socle of $D(A e_1)$. (Here we use that the socle of indecomposable projective modules in a selfinjective algebra are simple.)
This is a contradiction, since a basic algebra is selfinjective if and only if there is a permutation $\pi \colon \{1, \dots, n \} \to \{1, \dots, n \}$ such that $\soc(e_i A) \cong \top(e_{\pi(i)}A)$ for all $1 \leq i \leq n$. Thus $A$ has to be local.

On the other hand, assume now that $A$ is local and selfinjective with simple module $S$. Then $\bar{\Delta}(1)=S$ and thus it is trivial that $A$ is $\bar{\Delta}$-filtered, since there is a unique simple module $S$. This implies that $A$ is standardly stratified.
As a selfinjective algebra, $A$ has always infinite global dimension and can thus never be quasi-hereditary. (Recall that we do assume that $A$ is not semisimple in our article.)
\end{proof}

Since local Nakayama algebras are selfinjective, and local algebras in any case are of finitistic dimension zero, we assume from now on that our algebras have at least two simple modules.

\begin{proposition} \label{propprojdim}
Let $A$ be a connected Nakayama algebra having no simple projective module and having at least two simple modules.
Let $A$ be standardly stratified but not quasi-hereditary, and assume that $A$ is not selfinjective. Then there is a simple module of infinite projective dimension, and all other simple modules have projective dimension equal to one.
\end{proposition}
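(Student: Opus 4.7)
The plan is to set $e$ to be the last idempotent in the standardly stratified order and reduce to $B := A/AeA$, which will turn out to be hereditary. First I argue that $eAe$ is not semisimple: if it were, then Proposition~\ref{proplemma}(5) combined with Lemma~\ref{naklemma}---which ensures that $B$ is a connected Nakayama algebra of finite global dimension---would give $\gldim(A) \leq \gldim(B) + 2 < \infty$, making $A$ quasi-hereditary and contradicting the hypothesis. Hence $eAe$ is not semisimple, and Lemma~\ref{naklemma} yields that $B$ is hereditary and that the length $c_e$ of $eA$ satisfies $c_e \geq n+1$.

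For each simple $S$ whose idempotent differs from $e$, $S$ is a simple $B$-module, so $\pd_B(S) \leq 1$ by heredity, and Proposition~\ref{proplemma}(4) upgrades this to $\pd_A(S) \leq 2$. Since $\gldim(A)=\infty$, Theorem~\ref{old} rules out any simple of finite even projective dimension, forcing $\pd_A(S) \in \{1,\infty\}$.

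The crucial step is to show $\pd_A(S) = 1$ for every such $S$. For this I use Proposition~\ref{proplemma}(2), which gives that $AeA$ is projective as a right $A$-module. Decomposing $AeA = \bigoplus_i e_iAeA$ and using uniseriality, each nonzero summand $e_iAeA$ is a submodule of $e_iA$ with top isomorphic to the simple top of $eA$, so projectivity forces it to be isomorphic to $eA$; this translates into the identity $c_i = c_e + d_i$, where $d_i$ is the cyclic Kupisch-distance from $i$ to the Kupisch index of $e$. Combining this with the Kupisch inequality $c_{i+1}\geq c_i - 1$ and the bound $c_e \geq n+1$ rules out the vanishing case $e_iAeA = 0$ and forces, by induction along the cycle, the Kupisch series to read $c_e, c_e+n-1, c_e+n-2, \ldots, c_e+1$ starting from $e$. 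In particular $c_{i+1} = c_i - 1$ for every $i$ whose idempotent differs from $e$, so $\rad(P_i) \cong P_{i+1}$ is projective and $\pd_A(S_i) = 1$. Finally, since $\gldim(A) = \infty$ while all the other simples have projective dimension one, the simple corresponding to $e$ must have infinite projective dimension. The main technical obstacle is the inductive extraction of the precise Kupisch series from the projectivity of $AeA$; the rest of the proof is bookkeeping on top of Proposition~\ref{proplemma}.
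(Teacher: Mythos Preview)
Your argument is correct and begins exactly as the paper's proof does, but you have overlooked a consequence of your own work: once Proposition~\ref{proplemma}(4) gives $\pd_A(S)\le 2$, the value $\infty$ is no longer on the table. Combined with $\pd_A(S)\ne 0$ (no simple projective) and $\pd_A(S)\ne 2$ (Theorem~\ref{old}, or equivalently Proposition~\ref{proplemma}(3)), you already have $\pd_A(S)=1$ for every simple not at $e$. The paper's proof stops right there and then handles the remaining simple either as you do (some simple must have infinite projective dimension since $\gldim A=\infty$) or, more directly, via Proposition~\ref{proplemma}(1) applied to $S_e$ over the non-semisimple local algebra $eAe$.

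Your ``crucial step'' of reading off the entire Kupisch series from the projectivity of $AeA$ is therefore unnecessary for this proposition. It is, however, correct (your use of $c_e\ge n+1$ together with the Kupisch inequalities does force $e_iAeA\ne 0$ for every $i$, and then $e_iAeA\cong eA$ pins down $c_i=c_e+d_i$), and it is precisely the content of the paper's subsequent Proposition~\ref{Kupischseries}. In effect you have folded that later classification into the present proof, doing more work than needed here but anticipating the next result.
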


\begin{proof}
Let $n$ denote the number of simple $A$-modules and assume that $A$ is standardly stratified.
Let $e_1,e_2, \dots e_n$ be an ordering of the primitive idempotents for $A$ to be standardly stratified. By definition $\bar{\Delta}(n)=e_nA/e_nJe_nA$ and $\Delta(n)=e_nA$. By Proposition \ref{proplemma} (2), we have that $Ae_n A$ is projective. Since $A$ is a Nakayama algebra, we have $e_nJe_nA = yA$ for an element $y \in e_nJe_n \setminus e_n J^{n+1} e_n$ (which is unique up to multiplication by a field element), and thus $\bar{\Delta}(n) = e_n A/yA$.
We now look at two cases.

\underline{Case 1:} Assume $\bar{\Delta}(n)=\Delta(n)$, which is equivalent to $e_nA/yA=e_nA$ or $yA=0$.
This shows that $e_nJe_n=0$, and hence the algebra $e_n A e_n$ is semisimple. We can apply Proposition \ref{proplemma} (5) and Lemma \ref{naklemma} to see that $A$ has finite global dimension and thus is quasi-hereditary.

\underline{Case 2:} Now assume that $\bar{\Delta}(n) \neq \Delta(n)$, which is equivalent to $yA \neq 0$. We show that in this case the simple module $S_n$ has infinite projective dimension. Since $e_n y e_n \neq 0$ we have that the local algebra $e_n A e_n$ is not semisimple and thus a selfinjective Nakayama algebra, since for any Nakayama algebra $A$ $eAe$ is a Nakayama algebra again for any idempotent $e$.
Let $M:=S_n$, and use Proposition \ref{proplemma} (1) to see that $M$ has infinite projective dimension, or else the simple module $Me_n$ would be projective over the local selfinjective connected algebra $e_n Ae_n$, which is impossible.
%Note that $yA \neq 0$ implies the projective covers of the simple modules $\tau^i(S_n)$ have Loewy length %at least $n-i$.}
%Now we look at the algebra $\hat{A}:=A/Ae_n A$. Looking at the Loewy length of the indecomposable projective modules not isomorphic to $e_n A$ makes it clear that $\hat{A}$ is hereditary and thus every simple $\hat{A}$-module has projective dimension one or zero.
By Lemma \ref{naklemma}, the algebra $A/Ae_n A$ is hereditary.
%By Proposition \ref{proplemma} (4), one has for a simple module $S$ not isomorphic to $S_n$ that $\pd_A(S)= \infty $ implies $\pd_{\hat{A}}(S)=\infty$. But this is impossible since $A/A e_n A$ is an algebra of finite global dimension. Thus every simple module $S$ not isomorphic to $S_n$ has finite projective dimension.\comm{this step seems unnecessary}
Now Proposition \ref{proplemma} (4) gives that every simple $A$-module $S_i$ not isomorphic to $S_n$ has projective dimension at most two. But such an $S_i$ can not have projective dimension zero by assumption on $A$ and not projective dimension two since then $A$ would be quasi-hereditary and thus have finite global dimension.
So all simple $A$-modules except $S_n$ has projective dimension one, and this proves the proposition.
\end{proof}

The following proposition gives the possible Kupisch series for Nakayama algebras that are standardly stratified but not quasi-hereditary.

\begin{proposition} \label{Kupischseries}
Let $A$ be a (non-selfinjective) Nakayama algebra with $n$ simple modules and a simple module of projective dimension infinite and all other simple modules of projective dimension one. Then the Kupish series of $A$ is of the following form.
$$[k+qn,k+n-1+qn,k+n-2+qn,k+n-3+qn,\dots,k+2+qn,k+1+qn],$$
where $2 \leq k \leq n$ and $q \geq 1$, or $k=n+1$ and $q \geq 0$.
%\begin{enumerate}
%\item $[k+qn,k+n-1+qn,k+n-2+qn,k+n-3+qn,\dots,k+2+qn,k+1+qn]$, where $k \in \{2,3,\dots,n \}$ and $q \geq 1$.
%\item $[k+qn,k+n-1+qn,k+n-2+qn,k+n-3+qn,\dots,k+2+qn,k+1+qn]$, where $k=n+1$ and $q \geq 0$.
%\end{enumerate}
\end{proposition}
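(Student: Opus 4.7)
The plan is to exploit the hypothesis in two stages: first use the assumption that $n-1$ of the simples have projective dimension one to reduce the Kupisch series to a one-parameter family, and then pin down the range of the remaining parameter by analysing the projective resolution of the exceptional simple module.

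After a cyclic reordering of the primitive idempotents (allowed since the Kupisch order is unique only up to a cyclic permutation), I would relabel so that the simple of infinite projective dimension is $S_1$. In any Nakayama algebra one has $\Omega(S_j) \cong e_{j+1}A/e_{j+1}J^{c_j-1}$, so together with the standard inequality $c_{j+1} \geq c_j - 1$ the condition $\pd(S_j)=1$ is equivalent to the equality $c_{j+1} = c_j - 1$. Reading this off for $j = 2, 3, \dots, n$ (cyclically, so $j = n$ gives $c_1 = c_n - 1$) yields the recurrence $c_i = c_1 + n + 1 - i$ for $i \geq 2$, i.e.\ the Kupisch series takes the shape $[c_1,\, c_1+n-1,\, c_1+n-2,\, \dots,\, c_1+2,\, c_1+1]$.

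The next step is to show that $\pd(S_1) = \infty$ forces $c_1 \geq n+1$. Using the explicit syzygy formula for uniserial Nakayama modules one computes $\Omega^2(S_1) \cong e_j A/e_j J^n$, where $j \in \{1,\dots,n\}$ is the residue of $c_1+1$ modulo $n$; reading off the corresponding $c_j$ from the series above via a short case-split (according to whether $n$ divides $c_1$ or not) gives $c_j = n$ exactly when $c_1 \leq n$ and $c_j > n$ when $c_1 \geq n+1$. In the former case $\Omega^2(S_1)$ is projective, making $\pd(S_1) = 2$ — contrary to hypothesis. In the latter case $\Omega^2(S_1)$ is a nontrivial proper quotient of $P_j$, and one further iteration of the syzygy formula produces $\Omega^4(S_1) \cong \Omega^2(S_1)$, so the resolution is eventually $2$-periodic and nonzero; this yields $\pd(S_1) = \infty$. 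Hence $c_1 \geq n+1$ is both necessary and sufficient.

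Finally, the integers $c_1 \geq n+1$ are parameterised uniquely as $c_1 = k + qn$ with either $2 \leq k \leq n$ and $q \geq 1$, or $k = n+1$ and $q \geq 0$; substituting into the series above recovers exactly the expression in the proposition. The main obstacle is the syzygy bookkeeping of the third paragraph: one must run the generic case $2 \leq j \leq n$ and the boundary case $j = 1$ (corresponding to $n \mid c_1$, where the syzygy loops back to $e_1$ itself) in parallel, verifying in each that $\Omega^2(S_1)$ is nonzero and that $\Omega^4(S_1) \cong \Omega^2(S_1)$ — otherwise some $c_1 \geq n+1$ with finite $\pd(S_1)$ could slip through the argument.
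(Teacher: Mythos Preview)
Your proposal is correct and follows essentially the same two-stage approach as the paper: first use the $\pd(S_j)=1$ conditions to force the Kupisch series into the one-parameter family $[c_1,\,c_1+n-1,\dots,c_1+1]$, and then determine the admissible range of $c_1$ by analysing $\pd(S_1)$. The only difference is in the second stage: the paper's primary argument observes that $c_1\geq n+1$ makes the Loewy length at least $2n$ and invokes Gustafson to conclude infinite global dimension (with direct computation mentioned as an alternative), whereas you carry out that direct computation in full, exhibiting the eventual $2$-periodicity $\Omega^4(S_1)\cong\Omega^2(S_1)$. Your route is more self-contained; the paper's is shorter but leans on an external result.
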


\begin{proof}
Note that in a Nakayama algebra, the simple module $S_i=e_iA/e_iJ$ has projective dimension one if and only if $e_iJ$ is projective if and only if $e_iJ \cong e_{i+1}A$ if and only if $c_i-1=c_{i+1}$ in the Kupisch series $[c_1,c_2,\dots,c_n]$ of $A$.
Since $A$ has all but one simple module of projective dimension one, the Kupisch series of $A$ has the form $[x,x+n-1,x+n-2,x+n-3,\dots,x+1]$.
%but then by the standard form of Kupisch series $y$ is determined as $y=x$.\comm{?}
Thus we can write the Kupisch series as stated. In case $q \geq 1$ or $k \geq n+1$, the Loewy length of the algebra is at least $2n$ and thus the algebra has infinite global dimension by \cite{Gus} or alternatively by showing that in this case the first simple module has infinite projective dimension. In case $q=0$ and $k \leq n$, the first simple module has projective dimension two, and thus the algebra is quasi-hereditary by Proposition \ref{proplemma} (3).
\end{proof}

We give a corollary of the previous proposition.
\begin{corollary}
Let $A$ be a standardly stratified Nakayama algebra. Then $A$ is even properly stratified.
\end{corollary}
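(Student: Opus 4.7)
The plan is to split into three cases. First, if $A$ is selfinjective, the preceding Lemma forces $A$ to be local, and then $A^{\mathrm{op}}$ is also local selfinjective, so is standardly stratified by the same Lemma. Second, if $A$ is quasi-hereditary, then $A^{\mathrm{op}}$ is quasi-hereditary by the classical left--right symmetry of quasi-heredity (Dlab--Ringel), hence standardly stratified.

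The remaining case is $A$ standardly stratified, not quasi-hereditary and not selfinjective. Proposition~\ref{propprojdim} then gives that $A$ has a unique simple module $S_1$ of infinite projective dimension and all other simples of projective dimension one, and Proposition~\ref{Kupischseries} forces the Kupisch series of $A$ to the explicit shape $[k+qn,\,k+n-1+qn,\,\ldots,\,k+1+qn]$. To show $A^{\mathrm{op}}$ is standardly stratified, I would compute the lengths $|Ae_j|$ of the indecomposable left projective $A$-modules (which are precisely the lengths of the indecomposable projective right $A^{\mathrm{op}}$-modules) by counting nonzero paths ending at each vertex $j$ in the cyclic Nakayama presentation, using that a path of length $\ell$ starting at vertex $s$ is nonzero exactly when $\ell\leq c_s-1$. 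The rigid shape of the $c_i$'s then forces the cyclic sequence $(|Ae_1|,\ldots,|Ae_n|)$ to be a cyclic rotation of a sequence of the same Proposition~\ref{Kupischseries} shape. Since Nakayama algebras over a field are determined up to isomorphism by their Kupisch series, this yields $A\cong A^{\mathrm{op}}$, and so $A^{\mathrm{op}}$ is standardly stratified because $A$ is.

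The main obstacle is the path-counting step in the third case: verifying that the rigid Proposition~\ref{Kupischseries} shape is preserved when passing from right projective lengths to left projective lengths. An alternative route that avoids the isomorphism $A\cong A^{\mathrm{op}}$ is to construct a standardly stratified ordering for $A^{\mathrm{op}}$ directly: by a dual injective-resolution computation in $A$ one shows that exactly one simple has infinite injective dimension and the others have injective dimension one, so via $K$-duality exactly one left simple of $A$ has infinite projective dimension; placing the corresponding primitive idempotent $f$ last in the ordering for $A^{\mathrm{op}}$, Lemma~\ref{naklemma} gives that $A^{\mathrm{op}}/A^{\mathrm{op}}fA^{\mathrm{op}}$ is hereditary and a path-counting argument shows that $A^{\mathrm{op}}fA^{\mathrm{op}}$ is projective as a right $A^{\mathrm{op}}$-module, which together supply the recursive conditions appearing in Proposition~\ref{proplemma}(2).
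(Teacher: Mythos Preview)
Your three-case split and the treatment of the selfinjective and quasi-hereditary cases match the paper exactly, and in the remaining case you pursue the same goal as the paper, namely $A\cong A^{\mathrm{op}}$ via the Kupisch series. The paper, however, sidesteps the path-counting you flag as the main obstacle: it invokes the fact (Exercise~1, Chapter~32 of Anderson--Fuller) that the Kupisch series of $A^{\mathrm{op}}$ is a \emph{permutation} of that of $A$, and then notes that the general Kupisch constraints (after normalising so that $c_1$ is minimal and $c_n=c_1+1$, one has $c_{i+1}\geq c_i-1$) leave only one way to arrange the multiset $\{c,c+1,\ldots,c+n-1\}$ cyclically, namely the original $[c,c+n-1,\ldots,c+1]$. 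Hence the opposite Kupisch series coincides with the original and $A\cong A^{\mathrm{op}}$. This replaces your explicit computation of the $|Ae_j|$ by a short combinatorial uniqueness argument, and your alternative route via injective dimensions is then unnecessary.
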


\begin{proof}
The result is clear in case $A$ is quasi-hereditary or selfinjective, since the opposite algebra of a quasi-hereditary algebra is again quasi-hereditary in general and the opposite algebra of a local selfinjective Nakayama algebra is again a local selfinjective Nakayama algebra. Now assume that $A$ is standardly stratified with infinite global dimension and not being selfinjective. Assume $A$ has $n$ simple modules. By \ref{Kupischseries}, the Kupisch series of $A$ is of the form $[c,c+n-1,c+n-2,\dots,c+1]$ for some natural number $c \geq 2$.
By exercise 1 of chapter 32 of \cite{AnFul} the Kupisch series $[d_1,d_2,\dots,d_n]$ of the opposite algebra of a Nakayama algebra $[c_1,c_2,\dots,c_n]$ has the property that the $d_i$ are a permutation of the $c_i$.
As explained in the preliminaries we can assume that in a general Kupisch series we have $c_n=c_1+1$ with $c_1$ minimal among the $c_i$ and $c_{i+1} \geq c_i-1$.
Those conditions together with the fact that the Kupisch series of the opposite algebra is a permutation of the Kupisch series of $A$, force that the Kupisch series of the opposite algebra of $A$ coincides with the Kupisch series of $A$.
Thus $A$ is isomorphic to its opposite algebra and the result is clear.
\end{proof}

The next proposition shows that any Nakayama algebra with Kupisch series as in Proposition \ref{Kupischseries} is indeed standardly stratified. Thus this gives a classification of the standardly stratified Nakayama algebras that are not quasi-hereditary. Since it is elementary to decide whether a Nakayama algebra with a given Kupisch series is standardly stratified or not, and since we do not need the result in the following, we leave the proof of the next proposition to the interested reader.

\begin{proposition}
Let $A$ be a Nakayama algebra with Kupisch series as in Proposition \ref{Kupischseries}. Then $A$ is standardly stratified.
\end{proposition}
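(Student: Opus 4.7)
The plan is to exhibit an explicit cyclic reordering of the primitive idempotents under which the regular module $A_A$ admits a filtration by proper standard modules $\bar\Delta$. A short arithmetic check using the Kupisch formulas shows that exactly one index $j \in \{1, \ldots, n\}$ has $c_j$ divisible by $n$, namely the representative of $k+1$ modulo $n$; and I propose to reorder the primitive idempotents cyclically as $(f_1, \ldots, f_n) := (e_{j+1}, e_{j+2}, \ldots, e_{j-1}, e_j)$, placing this distinguished vertex last.

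The first step would be to identify the proper standard modules under this ordering. For $i < n$, the second composition factor of $f_i A = e_{j+i} A$ is $S_{j+i+1}$, which coincides with $S_{f_{i+1}}$ and hence is included in the defining sum $f_i + f_{i+1} + \cdots + f_n$; this forces $f_i J (f_i + \cdots + f_n) A$ to coincide with the full radical $e_{j+i}J$, so $\bar\Delta(i) = S_{j+i}$ is simple. At $i = n$, the space $e_j J e_j$ is spanned by the unique length-$n$ loop at vertex $j$, and it generates a submodule of length $c_j - n$; therefore $\bar\Delta(n)$ is uniserial of length $n$ with composition factors $S_j, S_{j+1}, \ldots, S_{j+n-1}$.

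Next I would filter each indecomposable projective. Since $c_j$ is a multiple of $n$ and the composition factors of $e_j A$ repeat the block $S_j, \ldots, S_{j+n-1}$ exactly $c_j/n$ times, the projective $f_n A = e_j A$ decomposes as an iterated extension of $c_j/n$ copies of $\bar\Delta(n)$. For $m \neq j$, setting $\ell := (j - m) \bmod n \in \{1, \ldots, n-1\}$, the top $\ell$ composition factors of $e_m A$ are the simples $S_m, S_{m+1}, \ldots, S_{j-1}$, each of which is a simple proper standard $\bar\Delta(i) = S_{f_i}$ for some $i < n$; these I would strip off in order. What remains is a uniserial submodule with top $S_j$ and length $c_m - \ell$, and the congruence $c_m \equiv j - m \pmod{n}$, which follows from $j \equiv k+1 \pmod{n}$ together with the explicit formulas $c_1 \equiv k \pmod{n}$ and $c_m \equiv k+1-m \pmod{n}$ for $m \geq 2$, guarantees that $c_m - \ell$ is a non-negative multiple of $n$; hence this tail decomposes into further copies of $\bar\Delta(n)$.

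The main obstacle is this arithmetic verification, i.e.\ ensuring that $c_m - ((j-m) \bmod n)$ is a non-negative multiple of $n$ for every $m$. This is routine but requires a brief case split over the three shapes of the Kupisch series ($k = n$ giving $j = 1$, $k = n+1$ giving $j = 2$, and $2 \leq k \leq n-1$ giving $j = k+1$). Once verified, summing the filtrations over $m$ produces the desired proper standard filtration of $A = \bigoplus_m e_m A$, which by the paper's convention shows $A \in \mathcal{F}(\bar\Delta)$ and hence that $A$ is standardly stratified.
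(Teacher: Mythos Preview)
Your proof is correct. The paper does not actually give a proof of this proposition---it explicitly says the verification is elementary and leaves it to the interested reader---so there is nothing to compare against. Your approach (cyclically reordering so that the unique vertex $j$ with $n \mid c_j$ sits in the last position, then checking that each indecomposable projective is filtered with simple proper standards $\bar\Delta(i)=S_{f_i}$ on top and copies of the length-$n$ module $\bar\Delta(n)$ below) is precisely the kind of direct computation the authors have in mind. One minor remark: the congruence $c_m \equiv j-m \pmod n$ already follows uniformly from $j\equiv k+1\pmod n$ together with $c_1\equiv k$ and $c_m\equiv k+1-m$ for $m\geq 2$, so the three-way case split on $k$ you mention is not strictly needed; non-negativity of $c_m-\ell$ is then automatic because $c_m\equiv\ell\pmod n$, $1\leq\ell\leq n-1$, and every $c_m\geq n+1$ under the stated hypotheses.
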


We can now give a proof of our second main result.
\begin{theorem}
Let $A$ be a Nakayama algebra with $n$ simple modules that is standardly stratified. Then $\findim A \leq n$. If $A$ is not quasi-hereditary, then $\findim A \leq 2$.
\end{theorem}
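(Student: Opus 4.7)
The plan is to split on whether $A$ is quasi-hereditary and handle each case separately.

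\textbf{Quasi-hereditary case.} If $A$ has a simple projective module then $A$ is triangular and $\gldim A \leq n-1 \leq n$. Otherwise, Proposition~\ref{proplemma}(3) furnishes a simple module of projective dimension two, so the parameter $m$ in Theorem~\ref{mainresult} equals $1$ and $\gldim A \leq n + m - 1 = n$. Since a quasi-hereditary algebra has finite global dimension, $\findim A = \gldim A \leq n$.

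\textbf{Non-quasi-hereditary case.} Local algebras have finitistic dimension $0$ and the lemma above rules out the remaining selfinjective possibilities, so the hypotheses of Proposition~\ref{propprojdim} apply: there is a unique simple $S$ of infinite projective dimension and all other simples have projective dimension one. Proposition~\ref{Kupischseries} then pins down the Kupisch series to $[c_1, \ldots, c_n] = [k+qn,\, k+n-1+qn,\, \ldots,\, k+1+qn]$, with $S$ identified as the simple whose projective has length $c_1$ in this ordering. I would then show that for any indecomposable $M = e_iA/e_iJ^j$ of finite projective dimension the second syzygy $\Omega^2 M$ is zero or projective, which gives $\pd M \leq 2$. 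The approach is a direct computation using the syzygy recursion $\Omega (e_iA/e_iJ^j) \cong e_{(i+j)\bmod n}A/e_{(i+j)\bmod n}J^{\,c_i - j}$. Three combinatorial facts about the explicit Kupisch series, each a short case split on whether an index equals $1$ (the single ``jump'' of the series), make the argument work: (i) $i + c_i$ is independent of $i$ modulo $n$, so $\Omega^2 M$ has a fixed top $S_t$ whenever it is nonzero; (ii) the length of $\Omega^2 M$ is divisible by $n$; and (iii) $c_t$ is itself divisible by $n$. Given these, if $\Omega^2 M$ were nonzero and non-projective then its length $j''$ would satisfy $0 < j'' < c_t$ with $j''$ and $c_t$ both divisible by $n$, and a one-line length computation shows $\Omega^4 M$ has top $S_t$ and length $j''$, so $\Omega^4 M \cong \Omega^2 M$. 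The resolution would be eventually $2$-periodic and $M$ would have infinite projective dimension, contradicting finiteness.

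The main obstacle is just the bookkeeping in establishing the three divisibility facts --- in particular making sure the case split on whether the indices are $1$ is handled completely --- after which the $2$-periodicity of the tail of the resolution is immediate.
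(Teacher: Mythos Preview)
Your quasi-hereditary case is identical to the paper's. In the non--quasi-hereditary case your argument is correct, but the route is genuinely different from the paper's.

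The paper does not compute syzygies directly. Instead it observes that every indecomposable projective has length at least $n+1$, so by Lemma~\ref{naklemma} the algebra $eAe$ is not semisimple and $A/AeA$ is hereditary; then Proposition~\ref{proplemma}(6) immediately gives $\findim A \leq 1+2 = 3$. To kill the case $\pd M = 3$ it invokes a parity result from \cite{Mad} (Proposition~2.2(a) there): a module of finite odd projective dimension has all composition factors of odd projective dimension, which by Proposition~\ref{propprojdim} forces every composition factor to have $\pd = 1$, contradicting $\pd M = 3$.

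Your approach trades these two off-the-shelf lemmas for an explicit periodicity computation. The three facts you isolate are indeed straightforward from the Kupisch series (fact~(i) is the identity $i + c_i \equiv k+1 \pmod n$ for all $i$; facts~(ii) and~(iii) follow by the same short case split on whether the relevant index is $1$), and once they are in hand the conclusion $\Omega^4 M \cong \Omega^2 M$ is immediate. What your argument buys is self-containment --- no appeal to the parity lemma from \cite{Mad} or to the recollement-type bound of Proposition~\ref{proplemma}(6) --- and a sharper structural statement: every module of finite projective dimension already has $\Omega^2 M$ projective or zero, and the resolution of any other module is eventually $2$-periodic with all syzygies from $\Omega^2$ onward having top $S_t$. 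The paper's argument is shorter but less explicit about what the resolutions actually look like.
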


\begin{proof}
\underline{Case 1: Quasi-hereditary case} \newline
If a Nakayama algebra has a simple projective module, then it is triangular and hence its global dimension is bounded by $n-1$.

By Proposition \ref{proplemma} (3), we know that a Nakayama algebra having no simple projective module is quasi-hereditary if and only if there exists a simple module of projective dimension equal to two.
By Theorem \ref{mainresult}, this implies that the global dimension, which is equal to the finitistic dimension, is bounded by $n+1-1=n$.

\underline{Case 2: Infinite global dimension case} \newline
Now assume that our algebras are standardly stratified with infinite global dimension. Clearly, the result is trivial in the selfinjective case, since selfinjective algebras have finitistic dimension equal to zero.

Now assume that the algebras are additionally non-selfinjective. Then we use Proposition \ref{Kupischseries} to calculate the finitistic dimension of those algebras.
%We first look at the case of two simple modules. Then the algebra has Kupisch series of the form $[d,d+1]$ for some $d \geq 3$. It is elementary to check that in this case the finitistic dimension is at most 2.
Assume that the algebra has at least two simple modules.
Let $e$ be a primitive idempotent such that $AeA$ is projective. In Proposition \ref{Kupischseries} we saw that each indecomposable projective module has length at least $n+1$, so $eAe$ is not semi-simple by Lemma \ref{naklemma}.
%Then $A/AeA$ is a hereditary Nakayama algebra because in $A$ each indecomposable projective module has length at least $n$.
Also by Lemma \ref{naklemma}, the algebra $A/AeA$ is hereditary.
Thus by Proposition \ref{proplemma} (6), the finitistic dimension of $A$ is at most three.

Let $M$ be an $A$-module with $\pd(M)=3$.
From Proposition 2.2 (a) in \cite{Mad} it follows that all simple composition factors of $M$ have odd projective dimension, so they all have projective dimension one by Proposition \ref{propprojdim}. But a module cannot have larger projective dimension than the maximum pd of its composition factors, and we reach a contradiction. So in the infinite global dimension case $\findim A \leq 2$.
\end{proof}

\begin{example}
Let $n \geq 2$.
This example shows that the bounds in the previous theorem are optimal. First note that the algebra in \ref{gldimnexample} is a quasi-hereditary algebra with $n$ simple modules and global dimension $n$. Thus the finitistic dimension is equal to the global dimension since the global dimension is finite. This shows that the bound $n$ for the finitistic dimension is optimal.
The Nakayama algebra with Kupisch series $[4,5]$ is standardly stratified with infinite global dimension. The unique indecomposable injective non-projective module has projective dimension two. Thus the bound 2 for finitistic dimensions of standardly stratified Nakayama algebras with infinite global dimension is also optimal.
\end{example}

\end{document}